\newcommand{\CC}{\mathbb{C}}
\newcommand{\NN}{\mathbb{N}}
\newcommand{\QQ}{\mathbb{Q}}
\newcommand{\ZZ}{\mathbb{Z}}
\newcommand{\set}[1]{\left\{ #1 \right\}}
\newcommand{\red}{_{\mathrm{red}}}
\newtheorem{mymasterthm}{notForUse}
\theoremstyle{definition}
\theoremstyle{plain}
\newtheorem{mylemma}[mymasterthm]{Lemma}
\newtheorem{mythm}[mymasterthm]{Theorem}
\title[Norm Form Equations with Solutions in Multi-Recurrences]{Norm Form Equations with Solutions Taking Values in a Multi-Recurrence}
\subjclass[2010]{11D57, 11B37, 11J87}
\keywords{Norm form equation, multi-recurrence, $ S $-unit equations}
\author[C. Fuchs]{Clemens Fuchs}
\author[S. Heintze]{Sebastian Heintze}
\thanks{Supported by Austrian Science Fund (FWF): I4406.}
\address{University of Salzburg\newline
	\indent Department of Mathematics\newline
	\indent Hellbrunnerstr. 34 \newline
	\indent A-5020 Salzburg, Austria}
\email{clemens.fuchs@sbg.ac.at, sebastian.heintze@sbg.ac.at}
\begin{document}
	
	\maketitle
	
	\begin{abstract}
		We are interested in solutions of a norm form equation that takes values in a given multi-recurrence. We show that among the solutions there are only finitely many values in each component which lie in the given multi-recurrence unless the recurrence is of precisely described exceptional shape. This gives a variant of the question on arithmetic progressions in the solution set of norm form equations.
	\end{abstract}
	
	\section{Introduction}
	
	Let $ K $ be an algebraic number field of degree $ d $ and let $ \alpha_1, \ldots, \alpha_n $ be linearly independent elements of $ K $ over $ \QQ $. Thus we have $ n \leq d $.
	We denote by $ N_{K/\QQ} $ the field norm of $ K $ and for an integer $ m $ we consider the norm form equation given by
	\begin{equation}
		\label{p7-eq:normformeq}
		N_{K/\QQ}( x_1 \alpha_1 + \cdots + x_n \alpha_n ) = m.
	\end{equation}
	It was proved by Schmidt (cf. \cite{schmidt-1971}) that if the $ \ZZ $-module generated by $ \alpha_1, \ldots, \alpha_n $ contains a submodule which is a full module in a subfield of $ \QQ(\alpha_1, \ldots, \alpha_n) $ different from the imaginary quadratic fields and $ \QQ $, then this equation has infinitely many solutions $ (x_1, \ldots, x_n) \in \ZZ^n $ for some $ m $.
	
	Observe that by multiplying with a common denominator we may assume that $ \alpha_1, \ldots, \alpha_n $ are algebraic integers and we will assume this from now on. In the present paper we are interested in those $ x_i $ which can be in the value set of a multi-recurrence. Since we are interested in proving finiteness results, we will always assume that \eqref{p7-eq:normformeq} has infinitely many solutions. This immediately implies that there exists an index $ i $ such that we have infinitely many solutions of \eqref{p7-eq:normformeq} with different $ x_i $.
	
	\section{Notation and Results}
	
	Let $ s $ be an arbitrary integer and $ G $ a multi-recurrence, that is a mapping from $ \ZZ^s \rightarrow \CC $ which is given by a function of polynomial-exponential type
	\begin{equation}
		\label{p7-eq:multirecurr}
		G(k_1,\ldots,k_s) = \sum_{j=1}^{q} P_j(k_1,\ldots,k_s) \alpha_{j1}^{k_1} \cdots \alpha_{js}^{k_s}
	\end{equation}
	with $ P_j(X_1,\ldots,X_s) \in \CC[X_1,\ldots,X_s] $ and non-zero $ \alpha_{j1},\ldots,\alpha_{js} \in \CC $ for $ j = 1,\ldots,q $.
	It is well-known that for such multi-sequences the value $ G(k_1,\ldots,k_s) $ can be described by certain linear combinations of values of $ G $ with shifted entries and hence they are the natural extension of sequences which satisfy a linear recurrence relation.
	The multi-recurrence $ G $ is called simple if $ \deg P_j = 0 $ for all $ j = 1,\ldots,q $; we put $ P_j(X_1,\ldots,X_s) = p_j $ in this case.
	Moreover, we say that $ G $ is defined over a number field $ K $ if the $ \alpha_{ji} $ and the coefficients of the $ P_j $ are elements of $ K $ for all $ j $ and $ i $.
	
	Clearly, $ G $ can have infinitely many zeros $ (k_1,\ldots,k_s) \in \ZZ^s $. We mention that the structure of solutions $ (k_1,\ldots,k_s) $ of $ G(k_1,\ldots,k_s) = 0 $, in the case that there are infinitely many of them, is not known (in contrast to the case $ s = 1 $ of linear recurring sequences where the Skolem-Mahler-Lech theorem says that all solutions lie in a finite union of arithmetic progressions).
	For more details on this see e.g. \cite{schmidt-2003}.
	
	It is also well-known and not too hard to prove (see e.g. the proof of Theorem 2.1 in \cite{berczes-hajdu-petho-2010}) that each component of the solutions of \eqref{p7-eq:normformeq} is contained in the union of finitely many multi-recurrences. In fact we have for each $ \ell \in \set{1,\ldots,n} $ that
	\begin{equation*}
		x_{\ell} = H(h_1,\ldots,h_r)
	\end{equation*}
	for a multi-recurrence $ H $ from a finite set of multi-recurrences all having the form
	\begin{equation*}
		H(h_1,\ldots,h_r) := \sum_{i=1}^{n} \tau_i \sigma_i(\varepsilon_1)^{h_1} \cdots \sigma_i(\varepsilon_r)^{h_r}
	\end{equation*}
	where the $ \tau_i $ are constants, $ \varepsilon_1, \ldots, \varepsilon_r $ is a system of fundamental units in the ring of integers and $ \sigma_1, \ldots, \sigma_n $ are embeddings of $ K $ in $ \CC $ such that the matrix $ (\sigma_i(\alpha_j)) $ has non-zero determinant.
	For the sake of completeness we give a short sketch of this proof. Let $ \sigma_1, \ldots, \sigma_d $ be the elements of the Galois group $ \mathrm{Gal}(K/\QQ) $. For $ i = 1,\ldots,d $ we get
	\begin{equation*}
		x_1 \sigma_i(\alpha_1) + \cdots + x_n \sigma_i(\alpha_n) = \sigma_i(\varepsilon) \sigma_i(\mu)
	\end{equation*}
	where $ \varepsilon $ is a unit and $ \mu $ an element of norm $ m $ which can be chosen from a finite set by Lemma 4 in \cite{evertse-gyory-1997}.
	Let us therefore consider a fixed value of $ \mu $. Choose the order of the isomorphisms $ \sigma_1, \ldots, \sigma_d $ in such a way that the matrix
	\begin{equation*}
		M =
		\begin{pmatrix}
			\sigma_1(\alpha_1) & \cdots & \sigma_1(\alpha_n) \\
			\vdots & \ddots & \vdots \\
			\sigma_n(\alpha_1) & \cdots & \sigma_n(\alpha_n)
		\end{pmatrix}
	\end{equation*}
	has non-zero determinant. This implies
	\begin{equation*}
		\begin{pmatrix}
			x_1 \\ \vdots \\ x_n
		\end{pmatrix}
		= M^{-1}
		\begin{pmatrix}
			\sigma_1(\varepsilon) \sigma_1(\mu) \\ \vdots \\ \sigma_n(\varepsilon) \sigma_n(\mu)
		\end{pmatrix}
	\end{equation*}
	and we get
	\begin{equation*}
		x_{\ell} = \sum_{i=1}^{n} m_{\ell i} \sigma_i(\varepsilon) \sigma_i(\mu).
	\end{equation*}
	Applying Dirichlet's unit theorem gives the above statement.
	
	In the sequel we call the set $ \Lambda(A,b) := \set{(k_1,\ldots,k_s)A+b : k_1,\ldots,k_s \in \ZZ} $ with $ A $ an $ (s \times r) $-matrix with entries in $ \ZZ $ and $ b $ a row vector with $ r $ entries from $ \ZZ $ a \emph{shifted sublattice} of $ \ZZ^r $. We say that $ (h_1,\ldots,h_r) $ runs through a shifted sublattice of $ \ZZ^r $ if $ (h_1,\ldots,h_r) \in \Lambda(A,b) $ for some $ A \in \ZZ^{s \times r} $ and $ b \in \ZZ^r $.
	
	Let $ L $ be a finite extension of $ K $. Then we can lift equation \eqref{p7-eq:normformeq} to a norm form equation in $ L $ using the tower formula for the field norm
	\begin{align*}
		N_{L/\QQ}( x_1 \alpha_1 + \cdots + x_n \alpha_n ) &= N_{K/\QQ}(N_{L/K}( x_1 \alpha_1 + \cdots + x_n \alpha_n )) \\
		&= N_{K/\QQ}(( x_1 \alpha_1 + \cdots + x_n \alpha_n )^{[L:K]}) = m^{[L:K]}.
	\end{align*}
	We call
	\begin{equation}
		\label{p7-eq:liftnormform}
		N_{L/\QQ}( x_1 \alpha_1 + \cdots + x_n \alpha_n ) = m^{[L:K]}
	\end{equation}
	the \emph{lifted norm form equation}.
	It is clear that all solutions of \eqref{p7-eq:normformeq} are solutions of \eqref{p7-eq:liftnormform} as well.
	
	Our interest applies to solutions $ (x_1,\ldots,x_n) $ of a norm form equation with the property that $ x_{\ell} = G(k_1,\ldots,k_s) $ for some $ \ell $ and a given multi-recurrence $ G $.
	It is easy to see that this problem may have infinitely many solutions, e.g. this is the case if
	\begin{equation}
		\label{p7-eq:unavoidexcepgen}
		G(k_1,\ldots,k_s) = H(h_1,\ldots,h_r)\left|_{(h_1,\ldots,h_r) = (k_1,\ldots,k_s)A+b} \right. + G_0(k_1,\ldots,k_s)
	\end{equation}
	for all $ (k_1,\ldots,k_s) $ within an arithmetic progression of $ s $-dimensional vectors (i.e. the cartesian product of $ s $ arithmetic progressions of integers), where $ H $ is a multi-recurrence coming from the solutions of the lifted norm form equation, evaluated at points that run through a shifted sublattice, and $ G_0 $ is a multi-recurrence that has infinitely many zeros along the intersection of that sublattice and the arithmetic progression.
	We call this case an \emph{unavoidable exception}.
	If $ G $ has the form \eqref{p7-eq:unavoidexcepgen} of an unavoidable exception with the additional property that $ G_0 = 0 $ and that $ H $ comes directly from the solutions of \eqref{p7-eq:normformeq}, i.e. trivially lifted to itself ($ L=K $), then we call it a \emph{reduced unavoidable exception}.
	
	We have the following first theroem that describes the situation in the general case:
	\begin{mythm}
		\label{p7-thm:generalcase}
		Let $ G $ be a simple multi-recurrence that is defined over $ K $ by \eqref{p7-eq:multirecurr} where all the $ \alpha_{ji} $ are algebraic integers. Then for any $ \ell \in \set{1,\ldots,n} $ there are at most finitely many values of $ x_{\ell} $ such that $ (x_1,\ldots,x_n) \in \ZZ^n $ is a solution of \eqref{p7-eq:normformeq} and
		\begin{equation*}
			x_{\ell} = G(k_1,\ldots,k_s)
		\end{equation*}
		for suitable $ (k_1,\ldots,k_s) \in \NN^s $ unless $ G $ has the form of an unavoidable exception.
	\end{mythm}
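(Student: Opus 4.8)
The plan is to combine the parametrisation of the solutions of a norm form equation recalled above with the theory of $S$-unit equations, and to read the exceptional shape off from the degenerate solutions. Fix $\ell\in\set{1,\ldots,n}$. By the discussion preceding the theorem, every $x_\ell$ coming from a solution of \eqref{p7-eq:normformeq} is of the form $x_\ell=H(h_1,\ldots,h_r)$ with $(h_1,\ldots,h_r)\in\ZZ^r$, where $H$ belongs to a fixed finite set of multi-recurrences of the shape
\[
H(h_1,\ldots,h_r)=\sum_{i=1}^{n}\tau_i\,\sigma_i(\varepsilon_1)^{h_1}\cdots\sigma_i(\varepsilon_r)^{h_r}.
\]
After passing to a finite extension $L$ of $K$ which contains a Galois closure of $K$ and all data needed to define $G$, we may assume that $H$ and $G$ are defined over $L$ and that the $\sigma_i$ act on $L$. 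As there are only finitely many such $H$, it suffices to fix one of them and to show that, unless $G$ has the form of an unavoidable exception, the right-hand side of
\[
H(h_1,\ldots,h_r)=G(k_1,\ldots,k_s),\qquad (h_1,\ldots,h_r)\in\ZZ^r,\ (k_1,\ldots,k_s)\in\NN^s,
\]
takes only finitely many values. Assume, for contradiction, that it takes infinitely many values, realised by pairs $(h^{(N)},k^{(N)})$ with pairwise distinct values $v_N$, $N=1,2,\ldots$.

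Rewriting this equation as
\[
\sum_{i=1}^{n}\tau_i\prod_{t=1}^{r}\sigma_i(\varepsilon_t)^{h_t}-\sum_{j=1}^{q}p_j\prod_{u=1}^{s}\alpha_{ju}^{k_u}=0
\]
and enlarging a finite set $S$ of places of $L$ so that every $\alpha_{ju}$ becomes an $S$-unit (the $\sigma_i(\varepsilon_t)$ being units anyway), we obtain a linear relation with fixed non-zero coefficients between the $n+q$ monomials appearing, each of which is an $S$-unit. By the theory of $S$-unit equations (the subspace theorem; cf. \cite{evertse-gyory-1997}), the solutions in which no proper subsum of the left-hand side vanishes have their vectors of mutual ratios of monomials in a finite set; hence for every solution the set of $n+q$ monomials splits into \emph{minimal} blocks, on each of which the corresponding subsum vanishes while no proper part of it does, and within every block the ratios of the monomials lie in a finite set. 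Passing to an infinite subsequence of the $(h^{(N)},k^{(N)})$ we may assume that this block partition, and all these within-block ratios, are one and the same for every $N$.

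For a block $B$ put $v_B:=\sum_{i\in B}\tau_i\prod_t\sigma_i(\varepsilon_t)^{h_t^{(N)}}=\sum_{j\in B}p_j\prod_u\alpha_{ju}^{k_u^{(N)}}$, so that $v_N=\sum_B v_B$ while $v_B=0$ for every block containing only $H$-monomials or only $G$-monomials. Since the $v_N$ are pairwise distinct, some block $B_0$ contains both an $H$-monomial and a $G$-monomial. Distinguishing one monomial of $B_0$ and using that all ratios against it are constant over the chosen subsequence, one obtains a finite system of multiplicative relations confining the exponent vectors $(h^{(N)},k^{(N)})$ to a single coset of a subgroup of $\ZZ^{r+s}$; projecting to the $k$-coordinates the $k^{(N)}$ then run through a (subset of a) shifted sublattice of $\ZZ^s$, and a section of this projection yields $A\in\ZZ^{s\times r}$, $b\in\ZZ^r$ with $h^{(N)}=k^{(N)}A+b$ for all $N$. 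Substituting, $G(k^{(N)})=v_N=H(h^{(N)})=H(k^{(N)}A+b)$, so $G_0(k):=G(k)-H(kA+b)$ --- itself a multi-recurrence in $k$, since $k\mapsto H(kA+b)$ is one --- vanishes at every $k^{(N)}$ and hence has infinitely many zeros along the shifted sublattice in question (the possibility $G_0\neq0$ reflecting blocks of the partition that consist of $G$-monomials alone and vanish along the $k^{(N)}$). Therefore $G$ satisfies \eqref{p7-eq:unavoidexcepgen} with this $H$, which comes from the solutions of \eqref{p7-eq:normformeq} (equivalently of its lift \eqref{p7-eq:liftnormform}), and with this $G_0$; that is, $G$ has the form of an unavoidable exception --- a contradiction --- which completes the proof.

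The main obstacle I expect is precisely this last structural step. The subspace theorem makes only the \emph{non-degenerate} solutions finite, so the substance of the theorem lies in showing that a family of degenerate solutions producing infinitely many values is forced into the narrow shape \eqref{p7-eq:unavoidexcepgen}; for $s>1$ the structure of such degenerate solutions is not understood beyond what the coset argument above extracts (compare the remarks around \cite{schmidt-2003} in the introduction), which is exactly why a residual term $G_0$ with uncontrolled zero set must be permitted. Carrying this out rigorously requires controlling the multiplicative relations among the conjugate units $\sigma_i(\varepsilon_t)$ (which need not all be multiplicatively independent) and among the $\alpha_{ju}$, keeping track of roots of unity, passing cleanly from ``coset of a subgroup'' to the integral affine form $h=kA+b$ over a genuine arithmetic progression of $s$-vectors (possibly after reparametrising and shrinking the progression), treating the case of several mixed blocks, and verifying that everything stays compatible with the restriction $(k_1,\ldots,k_s)\in\NN^s$, which is what keeps the powers $\alpha_{ju}^{k_u}$ algebraic integers.
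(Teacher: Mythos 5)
Your overall strategy is the same as the paper's: parametrize $x_\ell$ by a multi-recurrence $H$ built from fundamental units, set $H(h)=G(k)$, invoke the finiteness of non-degenerate solutions of unit equations (Evertse--Schlickewei--Schmidt) to force constant ratios between monomials along an infinite subsequence, convert a mixed $H$--$G$ ratio into an integral affine relation $h=kA+b$, and define $G_0:=G-H\left|_{\sharp}\right.$. However, there is a genuine gap at the central step, which you dispatch in one clause: ``a section of this projection yields $A\in\ZZ^{s\times r}$, $b\in\ZZ^r$ with $h^{(N)}=k^{(N)}A+b$.'' A coset of a subgroup of $\ZZ^{r+s}$ does not in general project onto the $k$-coordinates with an \emph{integral} affine section, and the multiplicative relation $\prod_t\sigma_i(\varepsilon_t)^{h_t}=c\prod_u\alpha_{ju}^{k_u}$ only determines $h$ integrally in terms of $k$ if the $\alpha_{ju}$ can be written in the group generated by the $\sigma_i(\varepsilon_t)$ and roots of unity, i.e.\ if they are \emph{units}, not merely $S$-units as your setup provides. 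The paper proves they are units by exploiting a second parametrization of $x_\ell$ over $K$ itself, ordering the subsequence so that the exponents $k_u-\widehat{k_u}$ are positive, and using the hypothesis that the $\alpha_{ji}$ are algebraic integers --- a hypothesis your argument never invokes, which is a warning sign that the step cannot be free.

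Three further pieces of preprocessing that the paper does, and that your argument needs but omits: (i) before any of this one must strip the $k$-vectors of constant components and of affine linear dependencies over $\QQ$ (reducing to independent coordinates $k_1,\ldots,k_l$); the dependent coordinates introduce rational exponents, which is exactly why the paper adjoins $d_j$-th roots of the $\alpha_{\kappa\nu}$ to form $L$ and why the conclusion must allow $H$ to come from the \emph{lifted} norm form equation rather than from \eqref{p7-eq:normformeq} directly --- your claim that the two are ``equivalent'' here is not justified; (ii) the roots of unity in the Dirichlet representation of the $\beta_{ij}$ mean the identity $G=H\left|_{\sharp}\right.+G_0$ holds only along an arithmetic progression of $s$-vectors, which must be produced explicitly; (iii) when several mixed blocks occur, each yields its own $(A^{(i)},b^{(i)})$, and their agreement is not automatic --- the paper derives it from the linear independence of $k_1,\ldots,k_l$ established in step (i). You correctly identify all of these as ``obstacles I expect,'' but they constitute the substance of the proof rather than routine verifications, so as it stands the proposal is an outline of the correct strategy with the hard steps deferred.
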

	
	In the special case $ s = 1 $ of a linear recurrence sequence we can prove the following stronger result:
	\begin{mythm}
		\label{p7-thm:specialcase}
		Let $ G $ be a simple linear recurrence sequence that is defined over $ K $ by \eqref{p7-eq:multirecurr} with $ s = 1 $. Then for any $ \ell \in \set{1,\ldots,n} $ there are at most finitely many values of $ x_{\ell} $ such that $ (x_1,\ldots,x_n) \in \ZZ^n $ is a solution of \eqref{p7-eq:normformeq} and
		\begin{equation*}
			x_{\ell} = G(k_1)
		\end{equation*}
		for suitable $ k_1 \in \NN $ unless $ G $ has the form of a reduced unavoidable exception.
	\end{mythm}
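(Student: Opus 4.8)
The overall plan is to deduce from Theorem~\ref{p7-thm:generalcase} that $G$ must be an unavoidable exception, and then to invoke the hypothesis $s=1$ twice in order to sharpen this to a \emph{reduced} unavoidable exception. So suppose, towards a contradiction, that there are infinitely many values of $x_{\ell}$ arising from solutions of \eqref{p7-eq:normformeq} with $x_{\ell} = G(k_1)$, and that $G$ is not a reduced unavoidable exception. Running the argument behind Theorem~\ref{p7-thm:generalcase} --- which for $s=1$ goes through even without assuming the roots of $G$ to be algebraic integers, since the underlying $S$-unit equations can be treated with the archimedean and the non-archimedean Subspace Theorem alike --- we obtain that $G$ is an unavoidable exception: there are an arithmetic progression $P$, a finite extension $L/K$, a multi-recurrence $H$ attached to the lifted norm form equation \eqref{p7-eq:liftnormform} and evaluated along a shifted sublattice $(h_1,\ldots,h_r) = k_1 A + b$, and a linear recurrence sequence $G_0$ having infinitely many zeros along $P$, such that $G(k_1) = H(k_1 A + b) + G_0(k_1)$ for all $k_1 \in P$ while infinitely many $k_1 \in P$ with $G_0(k_1)=0$ witness our solutions.

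First I would use $s=1$ to eliminate $G_0$. Since $s=1$, the sequence $G_0$ is an ordinary linear recurrence sequence having infinitely many zeros in $P$; by the Skolem--Mahler--Lech theorem its zero set is a finite union of arithmetic progressions together with a finite set, hence it contains an infinite arithmetic progression $P' \subseteq P$ on which $G_0 \equiv 0$. Along $P'$ we then have the clean identity $G(k_1) = H(k_1 A + b)$, and infinitely many $k_1 \in P'$ still witness genuine solutions of \eqref{p7-eq:normformeq}. This settles the condition $G_0 = 0$, and it remains to replace $L$ by $K$.

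For the descent, write $\theta(k_1)$ for the element of $L$ underlying $H$ at the point $k_1 A + b$; reading off the unit part gives $\theta(k_1) = C\eta^{k_1}$ with a fixed $C \in L^{*}$ and a unit $\eta \in \mathcal{O}_L^{*}$. For each of the infinitely many $k_1 \in P'$ that witness a genuine solution of \eqref{p7-eq:normformeq} the element $\theta(k_1)$ lies in $K$. Passing to a normal closure $\widetilde L$ of $L$, the set $\{k_1 \in \ZZ : \theta(k_1) \in K\}$ is the intersection over $\gamma \in \mathrm{Gal}(\widetilde L/K)$ of the solution sets of the exponential equations $(\gamma(C)/C)\,(\gamma(\eta)/\eta)^{k_1} = 1$; each of these is a coset of a subgroup of $\ZZ$, hence so is their intersection, and since it is infinite it equals $N\ZZ + c$ for some $N \geq 1$. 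Therefore $\eta^N \in \mathcal{O}_L^{*} \cap K = \mathcal{O}_K^{*}$ and $C\eta^{c} \in K$, and comparing norms along $k_1 = Nk'+c$ for the infinitely many admissible values of $k'$ forces $N_{K/\QQ}(\eta^N) = 1$ and $N_{K/\QQ}(C\eta^{c}) = m$. By Lemma~4 of \cite{evertse-gyory-1997} we may write $C\eta^{c} = \mu' u$ with $\mu'$ drawn from the finite set of elements of $K$ of norm $m$ and $u \in \mathcal{O}_K^{*}$; thus along $k_1 = Nk'+c$ the family $\theta$ consists of solutions of \eqref{p7-eq:normformeq} itself, and the component $x_{\ell}$ is given by $H'$ evaluated at a point that is affine-linear in $k'$, where $H'$ is one of the finitely many multi-recurrences attached directly to \eqref{p7-eq:normformeq}.

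It remains to observe that $k' \mapsto G(Nk'+c)$ and $H'$ evaluated along the above line are two simple linear recurrence sequences in $k'$ that agree for infinitely many $k'$, so a last application of the Skolem--Mahler--Lech theorem to their difference shows that they agree along an infinite arithmetic progression of values of $k'$. Hence $G$ coincides, for all $k_1$ in an infinite arithmetic progression, with a multi-recurrence $H'$ coming directly from \eqref{p7-eq:normformeq} and evaluated along a shifted sublattice, and with no $G_0$-term: $G$ is a reduced unavoidable exception, contrary to assumption. I expect the descent of the third paragraph to be the main obstacle: the key point is that $\{k_1 : \theta(k_1) \in K\}$ is a shifted sublattice of $\ZZ$, i.e.\ an arithmetic progression --- which is precisely where the hypothesis $s=1$ is used a second time --- and some care is needed to keep track, through the nested chain of infinite arithmetic progressions, of the fact that infinitely many of the remaining $k_1$ still witness genuine solutions of \eqref{p7-eq:normformeq} rather than merely of the lifted equation \eqref{p7-eq:liftnormform}.
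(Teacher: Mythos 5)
Your proof reaches the right conclusion, and the two places where you invoke $s=1$ (Skolem--Mahler--Lech to remove $G_0$, and a descent to eliminate the lifted field $L$) are both sound, but your route to $L=K$ is genuinely different from the paper's and considerably heavier. The paper simply observes that in the proof of Theorem~\ref{p7-thm:generalcase} the extension $L$ is introduced \emph{only} to adjoin the roots $\sqrt[d_j]{\alpha_{\kappa\nu}}$ coming from the linear dependencies \eqref{p7-eq:krepresentation} among the non-constant components $k_1,\ldots,k_t$; for $s=1$ the single component $k_1$ is strictly increasing, so no relation $a^{(1)}k_1+a_0^{(1)}=0$ holds infinitely often with $a^{(1)}\neq 0$, the list \eqref{p7-eq:krepresentation} is empty, and $L=K$ from the outset --- hence $H$ already comes from \eqref{p7-eq:normformeq} and no descent is needed. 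Your Galois-theoretic descent (the set $\{k_1:\theta(k_1)\in K\}$ is a coset $N\ZZ+c$, hence $\eta^N\in\mathcal{O}_K^{*}$ and $C\eta^{c}\in K$, then Lemma~4 of \cite{evertse-gyory-1997} and one more application of Skolem--Mahler--Lech) is a legitimate alternative that treats Theorem~\ref{p7-thm:generalcase} essentially as a black box; what it costs is the bookkeeping you yourself flag (tracking that the witnesses solve \eqref{p7-eq:normformeq} and not merely \eqref{p7-eq:liftnormform}, which requires re-entering the proof of Theorem~\ref{p7-thm:generalcase} anyway), plus a loose end at the very last step: after substituting $k_1=Nk'+c$ the exponents of the fundamental units of $K$ are affine in $k'$ but only $\QQ$-affine in $k_1$, so to match the definition of a reduced unavoidable exception literally (integral $A$ and $b$ in the substitution) you should keep the parametrization by $k'$ or pass to suitable powers of the units.

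Two smaller remarks. Your parenthetical reason for dropping the hypothesis that the $\alpha_{j1}$ are algebraic integers (``the $S$-unit equations can be treated with the archimedean and non-archimedean Subspace Theorem alike'') is not where that hypothesis actually enters: it is used only in the step where \eqref{p7-eq:alphaisunit} forces the bases to be units, and for $s=1$ the right-hand side there is the single power $\alpha_{i1}^{k_1-\widehat{k_1}}$ with positive exponent, so $\alpha_{i1}$ is automatically an algebraic integer (indeed a unit) with no integrality assumption needed. Also, when you conclude $N_{K/\QQ}(\eta^{N})=1$ from infinitely many norm-$m$ values, the norm could a priori be $-1$; this is harmless (replace $N$ by $2N$), but deserves a word.
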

	
	\section{Preliminaries}
	
	Let $ F $ be an algebraically closed field of characteristic $ 0 $. Denote the multiplicative group of non-zero elements by $ F^* $ and let $ (F^*)^n $ be the direct product consisting of $ n $-tuples $ (y_1,\ldots,y_n) $ with $ y_{\ell} \in F^* $ for $ \ell = 1,\ldots,n $ equipped with component-wise multiplication.
	Let $ \Gamma $ be a subgroup of $ (F^*)^n $ and suppose that $ (a_1,\ldots,a_n) \in (F^*)^n $. We will consider the generalized unit equation
	\begin{equation}
		\label{p7-eq:genuniteq}
		a_1 y_1 + \cdots + a_n y_n = 1
	\end{equation}
	in $ (y_1,\ldots,y_n) \in \Gamma $.
	A solution $ (y_1,\ldots,y_n) $ is called non-degenerate if no subsum of the left hand side of \eqref{p7-eq:genuniteq} vanishes, which means that $ \sum_{i \in I} a_i y_i \neq 0 $ for any non-empty subset $ I $ of $ \set{1,\ldots,n} $.
	The following lemma proved by Evertse, Schlickewei and Schmidt as Theorem 1.1 in \cite{evertse-schlickewei-schmidt-2002} will be used in our proofs:
	\begin{mylemma}
		\label{p7-lemma:boundnondegensol}
		Suppose that $ \Gamma $ has finite rank $ r $. Then the number of non-degenerate solutions $ (y_1,\ldots,y_n) \in \Gamma $ of equation \eqref{p7-eq:genuniteq} is bounded by
		\begin{equation*}
			\exp \left( (6n)^{3n} (r+1) \right).
		\end{equation*}
	\end{mylemma}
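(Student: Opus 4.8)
The plan is to reconstruct the argument of Evertse, Schlickewei and Schmidt, whose backbone is the \emph{Quantitative Subspace Theorem} in its absolute form over $\overline{\QQ}$ (as developed by Evertse and Schlickewei). First I would reduce to the case that $\Gamma$ is finitely generated: any hypothetical set of more than $\exp\left((6n)^{3n}(r+1)\right)$ non-degenerate solutions of \eqref{p7-eq:genuniteq} generates a subgroup of $\Gamma$ of rank at most $r$, so it suffices to bound the solutions lying in a finitely generated group. The coordinates of a finite generating set of $\Gamma$ together with $a_1,\ldots,a_n$ then lie in a finitely generated extension of $\QQ$, and passing to $F=\overline{\QQ}$, and in fact to a number field $K$ with $\Gamma\subseteq (K^*)^n$, can be arranged either by a specialization argument of Evertse--Gy\H{o}ry type (one has to check that non-degeneracy and the rank survive a generic specialization) or by simply carrying the absolute Weil height through the whole proof so that no such reduction is needed.

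Next I would split the solution set by a height threshold depending only on $n$ and $r$ into \emph{large} and \emph{small} solutions. For a large solution $(y_1,\ldots,y_n)$ I would, at each place $v$, select the index $i(v)$ for which $\abs{a_{i(v)}y_{i(v)}}_v$ is maximal; combining the identity $a_1y_1+\cdots+a_ny_n=1$ with the product formula and the standard height estimates forces the point $(a_1y_1,\ldots,a_ny_n)$ into one of a bounded number of proper linear subspaces, the bound depending only on $n$ (this is where working absolutely pays off, since the count must not depend on $K$). Inside a fixed subspace a relation $\sum_i c_i a_i y_i=0$ either exhibits a vanishing proper subsum --- impossible for a non-degenerate solution once $\set{1,\ldots,n}$ is split accordingly --- or lets one eliminate a variable, producing after division a new non-degenerate equation of the shape \eqref{p7-eq:genuniteq} in fewer variables whose solutions again lie in a group of rank $r$; induction on $n$ then applies, terminating at $n=1$, where there is at most one solution, and yielding the exponential-in-$n$ but only linear-in-$r$ contribution from the large solutions.

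The genuinely hard part --- and the whole point of the Evertse--Schlickewei--Schmidt refinement over earlier bounds --- is to estimate the number of \emph{small} solutions by a quantity that grows only \emph{linearly} in $r$. Here I would cover $\Gamma$ by cosets of a well-chosen finite-index subgroup (for instance $\Gamma^p$ for a suitable auxiliary prime $p$, or a sublattice adapted to the logarithmic embedding of $\Gamma$) so that within each coset the small solutions obey a gap principle permitting at most one of them, and then bound the number of relevant cosets by $(r+1)$ times a function of $n$ alone. Assembling the large-solution count, the small-solution count, and the inductive contributions, and absorbing all numerical constants, then yields exactly the stated bound $\exp\left((6n)^{3n}(r+1)\right)$. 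I expect the coset-and-gap-principle bookkeeping for the small solutions, and keeping every constant uniform in the underlying number field, to be the two main obstacles.
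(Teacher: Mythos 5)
The paper does not prove this statement at all: it is quoted verbatim as Theorem~1.1 of Evertse, Schlickewei and Schmidt \cite{evertse-schlickewei-schmidt-2002} and used as a black box, so there is no in-paper argument to compare yours against. Judged on its own, your proposal is a broadly accurate roadmap of the ESS proof --- reduction to a finitely generated group over a number field (or carrying the absolute height throughout), a height threshold separating large from small solutions, the quantitative absolute subspace theorem for the large ones with induction on $n$ after landing in a proper subspace, and a gap principle for the small ones --- but it is a roadmap, not a proof. The two load-bearing components are named rather than established: the quantitative absolute parametric subspace theorem (itself a separate, long Evertse--Schlickewei paper) is invoked without statement or proof, and the small-solution count is left as an acknowledged ``obstacle.'' Since the entire difficulty of the theorem lives in exactly those two places, the proposal as written has a genuine gap: nothing in it could be checked or completed without essentially importing the ESS paper wholesale.

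One concrete quantitative point in the small-solution step is also off. You propose to bound the number of cosets (or cones) needed by ``$(r+1)$ times a function of $n$ alone,'' which would make the small-solution count linear in $r$. The final bound $\exp\left((6n)^{3n}(r+1)\right)$ is of the form $A(n)^{r+1}$, i.e.\ exponential in $r$, and this is unavoidable: the covering argument in ESS partitions the $r$-dimensional real vector space spanned by the logarithmic image of $\Gamma$ into convex cones of bounded angular aperture, and the number of such cones grows like $C(n)^{r}$, not like $C(n)\cdot(r+1)$; the $(r+1)$ appears in the \emph{exponent} of the final bound precisely because of this exponential cone count. So even the shape of the bookkeeping you sketch for the small solutions would not deliver the stated constant without correction. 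If your goal is only to justify using the lemma, the right move is the paper's: cite \cite{evertse-schlickewei-schmidt-2002} and note that only the qualitative finiteness of non-degenerate solutions is actually needed in the subsequent arguments.
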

	In particular this implies that there are only finitely many non-degenerate solutions of the generalized unit equation.
	
	\section{Proofs}
	
	Now we are going to prove our two theorems. We start with the general case which will also be the base for the special one.
	
	\begin{proof}[Proof of Theorem \ref{p7-thm:generalcase}]
		Assume that for some $ \ell \in \set{1,\ldots,n} $ there are infinitely many values of $ x_{\ell} $ such that $ (x_1,\ldots,x_n) \in \ZZ^n $ is a solution of \eqref{p7-eq:normformeq} and
		\begin{equation*}
			x_{\ell} = G(k_1,\ldots,k_s)
		\end{equation*}
		for suitable $ (k_1,\ldots,k_s) \in \NN^s $.
		Then we can choose an infinite sequence of such values for $ x_{\ell} $ that are all non-zero and pairwise distinct.
		Each value corresponds to another vector $ (k_1,\ldots,k_s) $ satisfying $ x_{\ell} = G(k_1,\ldots,k_s) $. This vector is not necessarily uniquely determined, but we will fix one possible vector for each value of $ x_{\ell} $ now.
		
		Thus we get a sequence of vectors $ (k_1,\ldots,k_s) $. If the first component $ k_1 $ takes a fixed value for infinitely many elements of our sequence, then we go to a subsequence where $ k_1 $ is constant. Otherwise we go to a subsequence where $ k_1 $ is strictly increasing and non-zero.
		We perform the same procedure with the other components. After reindexing we can assume that $ k_1,\ldots,k_t $ are strictly increasing and that $ k_{t+1},\ldots,k_s $ are constant.
		
		The next step is to analyse whether there are linear dependencies between the non-constant components $ k_1,\ldots,k_t $ of the vectors $ (k_1,\ldots,k_s) $.
		If
		\begin{equation*}
			a_0^{(t)} + a_1^{(t)} k_1 + \cdots + a_t^{(t)} k_t = 0
		\end{equation*}
		for infinitely many vectors $ (k_1,\ldots,k_s) $ and constant rational integers $ a_0^{(t)}, \ldots, a_t^{(t)} $ which are not all zero, then we go to a subsequence where this equation holds.
		By reindexing we can assume that $ a_t^{(t)} \neq 0 $ and get
		\begin{equation*}
			k_t = \frac{1}{a_t^{(t)}} \left( -a_0^{(t)} - a_1^{(t)} k_1 - \cdots - a_{t-1}^{(t)} k_{t-1} \right).
		\end{equation*}
		If in addition for infinitely many vectors $ (k_1,\ldots,k_s) $ and constant rational integers $ a_0^{(t-1)}, \ldots, a_{t-1}^{(t-1)} $ which are not all zero the equation
		\begin{equation*}
			a_0^{(t-1)} + a_1^{(t-1)} k_1 + \cdots + a_{t-1}^{(t-1)} k_{t-1} = 0
		\end{equation*}
		holds, then we perform the analogous procedure.
		
		Thus we can assume that there is an index $ l \leq t $ such that both
		\begin{equation*}
			a_0^{(l)} + a_1^{(l)} k_1 + \cdots + a_l^{(l)} k_l = 0
		\end{equation*}
		for infinitely many vectors $ (k_1,\ldots,k_s) $ and constant rational integers $ a_0^{(l)}, \ldots, a_l^{(l)} $ implies $ a_0^{(l)} = \cdots = a_l^{(l)} = 0 $, and
		\begin{align}
			k_{l+1} &= \frac{1}{d_{l+1}} \left( \widetilde{a_0}^{(l+1)} + \widetilde{a_1}^{(l+1)} k_1 + \cdots + \widetilde{a_l}^{(l+1)} k_l \right) \nonumber \\
			\label{p7-eq:krepresentation}
			& \vdots \\
			k_t &= \frac{1}{d_t} \left( \widetilde{a_0}^{(t)} + \widetilde{a_1}^{(t)} k_1 + \cdots + \widetilde{a_l}^{(t)} k_l \right) \nonumber
		\end{align}
		with rational integers $ d_j > 0 $ and $ \widetilde{a_i}^{(j)} $ for our infinite sequence.
		Therefore we define the finite extension $ L $ of $ K $ as
		\begin{equation*}
			L = K \left( \set{\sqrt[d_j]{\alpha_{\kappa \nu}} : j = l+1,\ldots,t;\ \kappa = 1,\ldots,q;\ \nu = 1,\ldots,s} \right).
		\end{equation*}
		
		Now we mention that our sequence of values of $ x_{\ell} $ corresponds to a sequence of vectors $ (h_1,\ldots,h_r) $ satisfying
		\begin{equation*}
			x_{\ell} = H(h_1,\ldots,h_r)
		\end{equation*}
		for a fixed multi-recurrrence $ H $ coming from the solutions of the lifted norm form equation \eqref{p7-eq:liftnormform} if we go to a subsequence once again.
		Moreover, the sequence of values of $ x_{\ell} $ corresponds to a sequence of vectors $ (h_1',\ldots,h_{r'}') $ satisfying
		\begin{equation*}
			x_{\ell} = H'(h_1',\ldots,h_{r'}')
		\end{equation*}
		for a fixed multi-recurrrence $ H' $ coming from the solutions of the norm form equation \eqref{p7-eq:normformeq} if we go to a subsequence once again.
		
		Altogether we have the following correspondences which will be used tacitly in the sequel:
		\begin{equation*}
			x_{\ell} \leftrightarrow (k_1,\ldots,k_s) \leftrightarrow (h_1,\ldots,h_r) \leftrightarrow (h_1',\ldots,h_{r'}').
		\end{equation*}
		When we say that something holds for infinitely many vectors, this means for infinitely many vectors in our sequence and we implicitely go to a subsequence where this property is satisfied by all elements.
		
		Let us take a closer look at the multi-recurrence $ G $.
		Suppose that there is a constant $ c_{ij} $ for some distinct indices $ i,j $ such that for infinitely many vectors the equation
		\begin{equation}
			\label{p7-eq:reductionofG}
			\alpha_{j1}^{k_1} \cdots \alpha_{js}^{k_s} = c_{ij} \alpha_{i1}^{k_1} \cdots \alpha_{is}^{k_s}
		\end{equation}
		holds.
		Then we can construct a multi-recurrence
		\begin{equation*}
			G_0^{(I)}(k_1,\ldots,k_s) = \sum \left( p_j \alpha_{j1}^{k_1} \cdots \alpha_{js}^{k_s} - p_j c_{ij} \alpha_{i1}^{k_1} \cdots \alpha_{is}^{k_s} \right)
		\end{equation*}
		which is zero for the vectors in our sequence such that
		\begin{align*}
			G\red (k_1,\ldots,k_s) :&= G(k_1,\ldots,k_s) - G_0^{(I)}(k_1,\ldots,k_s) \\
			&= \sum_{j=1}^{\widetilde{q}} \widetilde{p_j} \alpha_{j1}^{k_1} \cdots \alpha_{js}^{k_s}
		\end{align*}
		does not contain two summands which satisfy a relation of the shape \eqref{p7-eq:reductionofG}.
		We perform the analogous procedure for the multi-recurrences $ H $ and $ H' $ to get
		\begin{align*}
			H\red (h_1,\ldots,h_r) :&= H(h_1,\ldots,h_r) - H_0(h_1,\ldots,h_r) \\
			&= \sum_{i=1}^{\widetilde{n}} \widetilde{\tau_i} \sigma_i(\varepsilon_1)^{h_1} \cdots \sigma_i(\varepsilon_r)^{h_r}
		\end{align*}
		and
		\begin{align*}
			H'\red (h_1,\ldots,h_r) :&= H'(h_1,\ldots,h_r) - H_0'(h_1,\ldots,h_r) \\
			&= \sum_{i=1}^{\widetilde{n}'} \widetilde{\tau_i}' \sigma_i'(\varepsilon_1')^{h_1'} \cdots \sigma_i'(\varepsilon_{r'}')^{h_{r'}'}.
		\end{align*}
		
		For our sequence we have the equality
		\begin{align*}
			H\red (h_1,\ldots,h_r) &= H(h_1,\ldots,h_r) - H_0(h_1,\ldots,h_r) \\
			&= H(h_1,\ldots,h_r) = x_{\ell} = G(k_1,\ldots,k_s) \\
			&= G(k_1,\ldots,k_s) - G_0^{(I)}(k_1,\ldots,k_s) \\
			&= G\red (k_1,\ldots,k_s).
		\end{align*}
		Putting in the sum representations, this yields
		\begin{equation}
			\label{p7-eq:applylemma}
			\sum_{i=1}^{\widetilde{n}} \widetilde{\tau_i} \sigma_i(\varepsilon_1)^{h_1} \cdots \sigma_i(\varepsilon_r)^{h_r}
			- \sum_{j=1}^{\widetilde{q}} \widetilde{p_j} \alpha_{j1}^{k_1} \cdots \alpha_{js}^{k_s}
			= 0
		\end{equation}
		which can be rewritten as
		\begin{equation*}
			\sum_{i=1}^{\widetilde{n}} \widetilde{\tau_i} \sigma_i(\varepsilon_1)^{h_1} \cdots \sigma_i(\varepsilon_r)^{h_r}
			- \sum_{j=1}^{\widetilde{q}-1} \widetilde{p_j} \alpha_{j1}^{k_1} \cdots \alpha_{js}^{k_s}
			= \widetilde{p_{\widetilde{q}}} \alpha_{\widetilde{q}1}^{k_1} \cdots \alpha_{\widetilde{q}s}^{k_s}
		\end{equation*}
		and after dividing by the right hand side this is
		\begin{equation*}
			\sum_{i=1}^{\widetilde{n}} \frac{\widetilde{\tau_i}}{\widetilde{p_{\widetilde{q}}}} \sigma_i(\varepsilon_1)^{h_1} \cdots \sigma_i(\varepsilon_r)^{h_r} \alpha_{\widetilde{q}1}^{-k_1} \cdots \alpha_{\widetilde{q}s}^{-k_s}
			- \sum_{j=1}^{\widetilde{q}-1} \frac{\widetilde{p_j}}{\widetilde{p_{\widetilde{q}}}} \left( \frac{\alpha_{j1}}{\alpha_{\widetilde{q}1}} \right)^{k_1} \cdots \left( \frac{\alpha_{js}}{\alpha_{\widetilde{q}s}} \right)^{k_s}
			= 1.
		\end{equation*}
		The previous line can be seen as a generalized unit equation in the $ \widetilde{n} + \widetilde{q} - 1 $ unknowns
		\begin{equation}
			\label{p7-eq:unknowns}
			\sigma_i(\varepsilon_1)^{h_1} \cdots \sigma_i(\varepsilon_r)^{h_r} \alpha_{\widetilde{q}1}^{-k_1} \cdots \alpha_{\widetilde{q}s}^{-k_s}
			,\
			\left( \frac{\alpha_{j1}}{\alpha_{\widetilde{q}1}} \right)^{k_1} \cdots \left( \frac{\alpha_{js}}{\alpha_{\widetilde{q}s}} \right)^{k_s}
		\end{equation}
		and by Lemma \ref{p7-lemma:boundnondegensol} either there are only finitely many solutions or we have a vanishing subsum.
		In the first case all expressions \eqref{p7-eq:unknowns} are constant for infinitely many vectors.
		In the second case some expressions \eqref{p7-eq:unknowns} are constant for infinitely many vectors and the remaining terms make up a vanishing subsum.
		We multiply the vanishing subsum by $ \widetilde{p_{\widetilde{q}}} \alpha_{\widetilde{q}1}^{k_1} \cdots \alpha_{\widetilde{q}s}^{k_s} $ and then do the same as we have done with \eqref{p7-eq:applylemma}.
		Since in each step we have less summands in the equation of the form \eqref{p7-eq:applylemma} as in the step before, this procedure ends after finitely many steps.
		
		This gives us a set of equations, valid for infinitely many vectors, of the following three types (in all three types it is $ i \neq j $):
		\begin{align}
			\label{p7-eq:typeA}
			\sigma_i(\varepsilon_1)^{h_1} \cdots \sigma_i(\varepsilon_r)^{h_r} &= C_{ij} \alpha_{j1}^{k_1} \cdots \alpha_{js}^{k_s} \\
			\label{p7-eq:typeB}
			\sigma_i(\varepsilon_1)^{h_1} \cdots \sigma_i(\varepsilon_r)^{h_r} &= D_{ij} \sigma_j(\varepsilon_1)^{h_1} \cdots \sigma_j(\varepsilon_r)^{h_r} \\
			\label{p7-eq:typeC}
			\alpha_{i1}^{k_1} \cdots \alpha_{is}^{k_s} &= E_{ij} \alpha_{j1}^{k_1} \cdots \alpha_{js}^{k_s}.
		\end{align}
		Each expression $ \sigma_i(\varepsilon_1)^{h_1} \cdots \sigma_i(\varepsilon_r)^{h_r} $ for $ i = 1,\ldots,\widetilde{n} $ and each expression $ \alpha_{j1}^{k_1} \cdots \alpha_{js}^{k_s} $ for $ j = 1,\ldots,\widetilde{q} $ occurs at least once among those equations.
		
		By our construction of the multi-recurrences $ G\red $ and $ H\red $ there cannot be an equation of type \eqref{p7-eq:typeB} or \eqref{p7-eq:typeC}.
		Moreover, no expression $ \sigma_i(\varepsilon_1)^{h_1} \cdots \sigma_i(\varepsilon_r)^{h_r} $ for $ i = 1,\ldots,\widetilde{n} $ and no expression $ \alpha_{j1}^{k_1} \cdots \alpha_{js}^{k_s} $ for $ j = 1,\ldots,\widetilde{q} $ can occur more than once among the equations of type \eqref{p7-eq:typeA} since otherwise we could deduce an equation of type \eqref{p7-eq:typeB} or \eqref{p7-eq:typeC}.
		Thus each expression $ \sigma_i(\varepsilon_1)^{h_1} \cdots \sigma_i(\varepsilon_r)^{h_r} $ for $ i = 1,\ldots,\widetilde{n} $ and each expression $ \alpha_{j1}^{k_1} \cdots \alpha_{js}^{k_s} $ for $ j = 1,\ldots,\widetilde{q} $ occurs exactly once among the equations of type \eqref{p7-eq:typeA}.
		Therefore we have $ \widetilde{n} = \widetilde{q} $ and after a suitable reindexing
		\begin{equation*}
			\sigma_i(\varepsilon_1)^{h_1} \cdots \sigma_i(\varepsilon_r)^{h_r} = C_{ii} \alpha_{i1}^{k_1} \cdots \alpha_{is}^{k_s}
		\end{equation*}
		for $ i = 1,\ldots,\widetilde{q} $.
		Since $ k_{t+1},\ldots,k_s $ are constant in our sequence, this can be rewritten as
		\begin{equation}
			\label{p7-eq:exppartequal}
			\sigma_i(\varepsilon_1)^{h_1} \cdots \sigma_i(\varepsilon_r)^{h_r} = \widetilde{C_i} \alpha_{i1}^{k_1} \cdots \alpha_{it}^{k_t}
		\end{equation}
		for $ i = 1,\ldots,\widetilde{q} $.
		
		The same steps we have done for $ H\red $ in the last paragraphs can be done for $ H'\red $ as well.
		So we have $ \widetilde{n}' = \widetilde{q} $ and
		\begin{equation*}
			\sigma_i'(\varepsilon_1')^{h_1'} \cdots \sigma_i'(\varepsilon_{r'}')^{h_{r'}'} = \widetilde{C_i}' \alpha_{i1}^{k_1} \cdots \alpha_{it}^{k_t}
		\end{equation*}
		for $ i = 1,\ldots,\widetilde{q} $.
		Let $ \left( \widehat{k_1},\ldots,\widehat{k_s} \right) $ be the first (smallest) element in our sequence.
		Then we get from the last equation by division the following one:
		\begin{equation}
			\label{p7-eq:alphaisunit}
			\sigma_i'(\varepsilon_1')^{h_1' - \widehat{h_1'}} \cdots \sigma_i'(\varepsilon_{r'}')^{h_{r'}' - \widehat{h_{r'}'}}
			= \alpha_{i1}^{k_1 - \widehat{k_1}} \cdots \alpha_{it}^{k_t - \widehat{k_t}}.
		\end{equation}
		Since the left hand side is a unit in the ring of integers (of $ K $ and thus also of $ L $), the right hand side must be a unit, too.
		Moreover, the exponents $ k_1 - \widehat{k_1}, \ldots, k_t - \widehat{k_t} $ are all positive rational integers by construction and the bases $ \alpha_{i1}, \ldots, \alpha_{it} $ are algebraic integers by assumption.
		Thus, $ \alpha_{i1}, \ldots, \alpha_{it} $ are units in the ring of integers for $ i = 1,\ldots,\widetilde{q} $ (of $ K $ and thus also of $ L $).
		
		From here on we will always work over $ L $.
		We use the representations \eqref{p7-eq:krepresentation} to rewrite equation \eqref{p7-eq:exppartequal} as
		\begin{align*}
			\sigma_i(\varepsilon_1)^{h_1} \cdots \sigma_i(\varepsilon_r)^{h_r} &= \widetilde{C_i} \alpha_{i1}^{k_1} \cdots \alpha_{it}^{k_t} \\
			&= \widetilde{C_i} \alpha_{i1}^{k_1} \cdots \alpha_{il}^{k_l} \prod_{u=l+1}^{t} \alpha_{iu}^{\frac{1}{d_u} \left( \widetilde{a_0}^{(u)} + \widetilde{a_1}^{(u)} k_1 + \cdots + \widetilde{a_l}^{(u)} k_l \right)} \\
			&= B_i \left( \alpha_{i1} \prod_{u=l+1}^{t} \alpha_{iu}^{\widetilde{a_1}^{(u)}/d_u} \right)^{k_1} \cdots \left( \alpha_{il} \prod_{u=l+1}^{t} \alpha_{iu}^{\widetilde{a_l}^{(u)}/d_u} \right)^{k_l} \\
			&= B_i \beta_{i1}^{k_1} \cdots \beta_{il}^{k_l}
		\end{align*}
		with
		\begin{equation*}
			\beta_{ij} := \alpha_{ij} \prod_{u=l+1}^{t} \alpha_{iu}^{\widetilde{a_j}^{(u)}/d_u}
		\end{equation*}
		for $ j = 1,\ldots,l $ and $ i = 1,\ldots,\widetilde{q} $.
		As the $ \alpha_{ij} $ are units in the ring of integers, by our construction of the number field $ L $ the $ \beta_{ij} $ are units in the ring of integers of $ L $.
		
		Therefore we have
		\begin{equation*}
			\sigma_i(\varepsilon_1)^{h_1} \cdots \sigma_i(\varepsilon_r)^{h_r} = B_i \beta_{i1}^{k_1} \cdots \beta_{il}^{k_l}
		\end{equation*}
		and by division
		\begin{equation}
			\label{p7-eq:betatransform}
			\sigma_i(\varepsilon_1)^{h_1 - \widehat{h_1}} \cdots \sigma_i(\varepsilon_r)^{h_r - \widehat{h_r}} = \beta_{i1}^{k_1 - \widehat{k_1}} \cdots \beta_{il}^{k_l - \widehat{k_l}}
		\end{equation}
		for $ i = 1,\ldots,\widetilde{q} $.
		Since $ \beta_{ij} $ is a unit, also $ \sigma_i^{-1}(\beta_{ij}) $ is a unit and by the Dirichlet unit theorem we can write this as
		\begin{equation*}
			\sigma_i^{-1}(\beta_{ij}) = \zeta^{(ij)} \varepsilon_1^{w_1^{(ij)}} \cdots \varepsilon_r^{w_r^{(ij)}}
		\end{equation*}
		for rational integers $ w_1^{(ij)},\ldots,w_r^{(ij)} $ and a root of unity $ \zeta^{(ij)} $.
		Applying $ \sigma_i $ yields
		\begin{equation*}
			\beta_{ij} = \sigma_i(\zeta^{(ij)}) \sigma_i(\varepsilon_1)^{w_1^{(ij)}} \cdots \sigma_i(\varepsilon_r)^{w_r^{(ij)}}.
		\end{equation*}
		Now we put this into equation \eqref{p7-eq:betatransform} and apply $ \sigma_i^{-1} $ to get
		\begin{align*}
			\varepsilon_1^{h_1 - \widehat{h_1}} \cdots \varepsilon_r^{h_r - \widehat{h_r}} &= \left( \zeta^{(i1)} \varepsilon_1^{w_1^{(i1)}} \cdots \varepsilon_r^{w_r^{(i1)}} \right)^{k_1 - \widehat{k_1}} \cdots \left( \zeta^{(il)} \varepsilon_1^{w_1^{(il)}} \cdots \varepsilon_r^{w_r^{(il)}} \right)^{k_l - \widehat{k_l}} \\
			&= \zeta^{(i1)^{k_1 - \widehat{k_1}}} \cdots \zeta^{(il)^{k_l - \widehat{k_l}}} \prod_{v=1}^{r} \varepsilon_v^{(k_1 - \widehat{k_1})w_v^{(i1)} + \cdots + (k_l - \widehat{k_l})w_v^{(il)}}.
		\end{align*}
		Since the representation of any unit in the Dirichlet unit theorem is uniquely determined, we get
		\begin{align*}
			1 &= \zeta^{(i1)^{k_1 - \widehat{k_1}}} \cdots \zeta^{(il)^{k_l - \widehat{k_l}}} \\
			h_1 - \widehat{h_1} &= (k_1 - \widehat{k_1})w_1^{(i1)} + \cdots + (k_l - \widehat{k_l})w_1^{(il)} \\
			&\vdots \\
			h_r - \widehat{h_r} &= (k_1 - \widehat{k_1})w_r^{(i1)} + \cdots + (k_l - \widehat{k_l})w_r^{(il)}.
		\end{align*}
		We rewrite this in matrix notation which results in
		\begin{align*}
			\begin{pmatrix}
				h_1 - \widehat{h_1} \\
				h_2 - \widehat{h_2} \\
				\vdots \\
				h_r - \widehat{h_r}
			\end{pmatrix}
			&=
			\begin{pmatrix}
				w_1^{(i1)} & w_1^{(i2)} & \cdots & w_1^{(il)} \\
				w_2^{(i1)} & w_2^{(i2)} & \cdots & w_2^{(il)} \\
				\vdots & \vdots & \ddots & \vdots \\
				w_r^{(i1)} & w_r^{(i2)} & \cdots & w_r^{(il)}
			\end{pmatrix}
			\begin{pmatrix}
				k_1 - \widehat{k_1} \\
				k_2 - \widehat{k_2} \\
				\vdots \\
				k_l - \widehat{k_l}
			\end{pmatrix} \\
			&=
			\begin{pmatrix}
				w_1^{(i1)} & w_1^{(i2)} & \cdots & w_1^{(il)} & 0 & \cdots & 0 \\
				w_2^{(i1)} & w_2^{(i2)} & \cdots & w_2^{(il)} & 0 & \cdots & 0 \\
				\vdots & \vdots & \ddots & \vdots & \vdots & \ddots & \vdots \\
				w_r^{(i1)} & w_r^{(i2)} & \cdots & w_r^{(il)} & 0 & \cdots & 0
			\end{pmatrix}
			\begin{pmatrix}
				k_1 - \widehat{k_1} \\
				k_2 - \widehat{k_2} \\
				\vdots \\
				k_s - \widehat{k_s}
			\end{pmatrix}.
		\end{align*}
		Now we transpose the equation and get
		\begin{equation}
			\label{p7-eq:diffeq}
			\left( h_1 - \widehat{h_1}, h_2 - \widehat{h_2}, \ldots, h_r - \widehat{h_r} \right) = \left( k_1 - \widehat{k_1}, k_2 - \widehat{k_2}, \ldots, k_s - \widehat{k_s} \right) \cdot A^{(i)}
		\end{equation}
		with
		\begin{equation*}
			A^{(i)} =
			\begin{pmatrix}
				w_1^{(i1)} & w_2^{(i1)} & \cdots & w_r^{(i1)} \\
				w_1^{(i2)} & w_2^{(i2)} & \cdots & w_r^{(i2)} \\
				\vdots & \vdots & \ddots & \vdots \\
				w_1^{(il)} & w_2^{(il)} & \cdots & w_r^{(il)} \\
				0 & 0 & \cdots & 0 \\
				\vdots & \vdots & \ddots & \vdots \\
				0 & 0 & \cdots & 0
			\end{pmatrix}
			\in \ZZ^{s \times r}
		\end{equation*}
		for $ i = 1,\ldots,\widetilde{q} $.
		Since the $ \ \widehat{}\ $-vector is a fixed element of our sequence (namely the first one) we can define the vector $ b^{(i)} \in \ZZ^r $ to be the solution of
		\begin{equation}
			\label{p7-eq:initeq}
			\left( \widehat{h_1}, \widehat{h_2}, \ldots, \widehat{h_r} \right) = \left( \widehat{k_1}, \widehat{k_2}, \ldots, \widehat{k_s} \right) \cdot A^{(i)} + b^{(i)}
		\end{equation}
		for $ i = 1,\ldots,\widetilde{q} $.
		Adding the equations \eqref{p7-eq:diffeq} and \eqref{p7-eq:initeq} yields
		\begin{equation}
			\label{p7-eq:ilattice}
			\left( h_1, h_2, \ldots, h_r \right) = \left( k_1, k_2, \ldots, k_s \right) \cdot A^{(i)} + b^{(i)}
		\end{equation}
		for $ i = 1,\ldots,\widetilde{q} $.
		
		In the next step we consider two instances of equation \eqref{p7-eq:ilattice} for different indices $ i_1 $ and $ i_2 $.
		Subtracting one of them from the other one gives
		\begin{equation*}
			0 = \left( k_1, k_2, \ldots, k_s \right) \cdot \left( A^{(i_1)} - A^{(i_2)} \right) + b^{(i_1)} - b^{(i_2)}.
		\end{equation*}
		Since we have excluded any further linear dependencies of $ k_1,\ldots,k_l $ in the paragraph containing equation \eqref{p7-eq:krepresentation}, it must hold that $ A^{(i_1)} = A^{(i_2)} $ and $ b^{(i_1)} = b^{(i_2)} $.
		Therefore we can omit the superscript and write
		\begin{equation*}
			A := A^{(1)} = A^{(2)} = \cdots = A^{(\widetilde{q})}
		\end{equation*}
		as well as
		\begin{equation*}
			b := b^{(1)} = b^{(2)} = \cdots = b^{(\widetilde{q})}
		\end{equation*}
		in what follows.
		
		We will now evaluate the multi-recurrence $ H\red $ at the shifted sublattice given by $ A $ and $ b $. For each summand we get the identity
		\begin{align*}
			\widetilde{\tau_i} &\sigma_i(\varepsilon_1)^{h_1} \cdots \sigma_i(\varepsilon_r)^{h_r}\left|_{(h_1,\ldots,h_r) = (k_1,\ldots,k_s)A+b} \right. = \\
			&= \widetilde{q_i} \prod_{v=1}^{r} \sigma_i(\varepsilon_v)^{w_v^{(i1)} k_1 + \cdots + w_v^{(il)} k_l} \\
			&= \widetilde{q_i} \left( \sigma_i\left( \zeta^{(i1)^{k_1}} \cdots \zeta^{(il)^{k_l}} \right) \right)^{-1} \prod_{u=1}^{l} \left( \sigma_i(\zeta^{(iu)}) \sigma_i(\varepsilon_1)^{w_1^{(iu)}} \cdots \sigma_i(\varepsilon_r)^{w_r^{(iu)}} \right)^{k_u} \\
			&= \widetilde{q_i} \left( \sigma_i\left( \zeta^{(i1)^{k_1}} \cdots \zeta^{(il)^{k_l}} \right) \right)^{-1} \beta_{i1}^{k_1} \cdots \beta_{il}^{k_l} \\
			&= q_i \beta_{i1}^{k_1} \cdots \beta_{il}^{k_l},
		\end{align*}
		where the last equality holds (only) for $ (k_1,\ldots,k_s) $ within an arithmetic progression of $ s $-dimensional vectors. This arithmetic progression can be chosen in such a way that it contains infinitely many of our vectors and that it is the same for all summands, i.e. for $ i = 1,\ldots, \widetilde{n} $.
		Thus, along an arithmetic progression we have the identity
		\begin{equation*}
			H\red(h_1,\ldots,h_r)\left|_{(h_1,\ldots,h_r) = (k_1,\ldots,k_s)A+b} \right. = \sum_{i=1}^{\widetilde{n}} q_i \beta_{i1}^{k_1} \cdots \beta_{il}^{k_l} =: G^*(k_1,\ldots,k_s).
		\end{equation*}
		Since we have seen above that $ H\red = G\red $ for the vectors in our sequence, $ G_0^{(II)} := G\red - G^* $ is zero for the vectors in our sequence.
		
		In the sequel we will use the shortcut $ \left|_{\sharp} \right. $ for $ \left|_{(h_1,\ldots,h_r) = (k_1,\ldots,k_s)A+b} \right. $ to make the chain of equalities more readable.
		Moreover, we define
		\begin{equation*}
			G_0 := G_0^{(I)} + G_0^{(II)} - H_0\left|_{\sharp} \right..
		\end{equation*}
		We emphasize that $ G_0 $ is zero for the vectors in our sequence.
		Putting all things together we get
		\begin{align*}
			G &= G\red + G_0^{(I)} = G^* + G_0^{(II)} + G_0^{(I)} \\
			&= H\red\left|_{\sharp} \right. + G_0^{(II)} + G_0^{(I)} \\
			&= H\left|_{\sharp} \right. - H_0\left|_{\sharp} \right. + G_0^{(II)} + G_0^{(I)} \\
			&= H\left|_{\sharp} \right. + G_0
		\end{align*}
		as an identity along an arithmetic progression.
		Thus $ G $ has the form of an unavoidable exception.
	\end{proof}
	
	It remains to prove Theorem \ref{p7-thm:specialcase}. Since the procedure is the same as in the proof of Theorem \ref{p7-thm:generalcase} we will only describe the differences.
	
	\begin{proof}[Proof of Theorem \ref{p7-thm:specialcase}]
		We have $ s = 1 $.
		Assume that for some $ \ell \in \set{1,\ldots,n} $ there are infinitely many values of $ x_{\ell} $ such that $ (x_1,\ldots,x_n) \in \ZZ^n $ is a solution of \eqref{p7-eq:normformeq} and
		\begin{equation*}
			x_{\ell} = G(k_1)
		\end{equation*}
		for suitable $ k_1 \in \NN $.
		Then there are obviously no equations of the form $ a^{(1)} k_1 = b^{(1)} $ valid for infinitely many $ k_1 $ unless $ a^{(1)} = 0 $.
		Therefore in the construction in the previous proof we get $ L = K $.
		
		In the new equation \eqref{p7-eq:alphaisunit} we have on the right hand side the expression
		\begin{equation*}
			\alpha_{i1}^{k_1 - \widehat{k_1}}.
		\end{equation*}
		Thus we can deduce that $ \alpha_{i1} $ must be an algebraic integer. It is not necessary to assume this.
		
		In the same way as in the proof of Theorem \ref{p7-thm:generalcase} we get
		\begin{equation*}
			G = H\left|_{\sharp} \right. + G_0
		\end{equation*}
		along an arithmetic progression.
		Since $ L = K $ the recurrence $ H $ comes directly from the solutions of \eqref{p7-eq:normformeq}.
		Since $ G_0 $ has infinitely many zeros, by the Skolem-Mahler-Lech theorem we have $ G_0 = 0 $ if we go to a new arithmetic progression.
		Thus $ G $ has the form of a reduced unavoidable exception.
	\end{proof}

\end{document}